\documentclass{amsart}

\usepackage[cp1252]{inputenc}
\usepackage[T1]{fontenc}
\usepackage{lmodern}
\usepackage{graphicx}
\usepackage[all]{xy}
\usepackage{txfonts}
\usepackage{amsmath,amsthm}

\usepackage{pstricks,pstricks-add,pst-math,pst-xkey}


\newcommand{\K}{\mathbb{K}}
\newcommand{\p}{\mathbb{P}^2}
\newcommand{\Aut}[1]{\textbf{Aut}({#1})}

\newcommand{\A}{\mathbb{A}}


\newcommand{\compl}{\p \backslash}

\newcommand{\E}{\tilde{E}}

\newcommand{\Q}{\tilde{Q}}
\newcommand{\C}{\tilde{C}}


\numberwithin{equation}{section}%

\newtheorem{thm}[equation]{Theorem}%

\newtheorem{lemme}[equation]{Lemma}

\newtheorem{conj}[equation]{Conjecture}

\newtheorem{rem}[equation]{Remark}

\newtheorem{dfn}[equation]{Definition}


\begin{document}
\title{New distinct curves having the same complement in the projective plane.}
\author{Paolo Costa}
\address{Paolo Costa, UniGe}
\email{costapa5@etu.unige.ch}
\date{\today}
\begin{abstract}
In 1984, H. Yoshihara conjectured that if two plane irreducible curves have isomorphic complements, they are projectively equivalent, and proved the conjecture for a special family of unicuspidal curves. Recently, J. Blanc gave counterexamples of degree $39$ to this conjecture, but none of these is unicuspidal.
In this text, we give a new family of counterexamples to the conjecture, all of them being unicuspidal, of degree $4m+1$ for any $m\ge 2$. In particular, we have counterexamples of degree $9$, which seems to be the lowest possible degree.
\end{abstract}
\maketitle{}
\section{The conjecture.}

In the sequel, we will work with algebraic varieties over a fixed ground field $\K$, which can be arbitrary.
\\
\begin{conj}[\cite{Yos84}]
Suppose that the ground field is algebraically closed of characteristic zero. Let $C \subset \p$ be an irreducible curve. Suppose that $\p \backslash C$ is isomorphic to $\p \backslash D$ for some curve $D$. Then $C$ and $D$ are projectively equivalent, i.e. there's an automorphism $\alpha : \p \to \p$ such that $\alpha(C)=D$.
\end{conj}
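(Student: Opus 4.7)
The statement above is a \emph{conjecture}, and the abstract announces it as \emph{false}: the main theorem of the paper exhibits unicuspidal counterexamples in every degree $4m+1$ with $m \ge 2$. Accordingly, my goal is not to prove the conjecture but to sketch how I would construct such counterexamples, i.e.\ pairs $(C,D)$ of projectively inequivalent irreducible unicuspidal plane curves of the same degree admitting an isomorphism $\p \setminus C \cong \p \setminus D$.

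The natural framework is the birational geometry of rational surfaces. I would look for a smooth projective surface $X$ together with two birational morphisms $\pi_1, \pi_2 : X \to \p$, each obtained by contracting a tree $T_i$ of rational curves, and a distinguished irreducible rational curve $\X \subset X$ such that $\pi_i(\X) = C_i$ is a plane curve and the pair $(T_i, \X)$ exhausts the exceptional locus of $\pi_i$. Then $\pi_2 \circ \pi_1^{-1}$ restricts to an isomorphism $\p \setminus C_1 \to \p \setminus C_2$, and setting $C = C_1$, $D = C_2$ gives a candidate counterexample. Unicuspidality is enforced by requiring that $\X$ meets each $T_i$ at exactly one point, so that the cusp of $C_i$ is the image of that contact; the analytic type of the cusp is then read off from the dual graph of $T_i$. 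Matching the arithmetic genus of $\X$ against the genus drop imposed by the chains, together with the condition that $\pi_i(\X)$ be a plane curve, should force the degree into the shape $4m+1$. Concretely I would begin with the simplest combinatorics yielding the case $m=2$ (degree $9$), check by hand that both contractions exist, and then iterate the construction to produce the whole family.

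The principal obstacle is showing that $C$ and $D$ are \emph{not} projectively equivalent. The construction is intrinsically symmetric in $T_1$ and $T_2$, so some additional ingredient must break the symmetry. My approach would be a moduli/dimension count: fix the analytic type of the cusp and the degree $4m+1$, compute the dimension of the equisingular family of unicuspidal curves with those invariants, and compare with $\dim \mathrm{PGL}_3 = 8$. If this family has strictly more than $8$ moduli then projective equivalence between $C$ and $D$ is the exception rather than the rule, and a generic choice of parameters in the construction should yield a genuine counterexample; alternatively, one might exhibit a discrete invariant (e.g.\ the configuration of auxiliary tangent lines or flex points) that distinguishes $C$ from $D$. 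A secondary obstacle, implicit in the paper's opening remark that $\K$ is arbitrary, is to perform every step in arbitrary characteristic, avoiding any appeal to characteristic-zero tools such as integration of vector fields in the dimension estimates.
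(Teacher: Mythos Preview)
You correctly recognise that the statement is a conjecture the paper refutes, and your high-level picture (two contractions from a common surface) matches the paper's Lemma~2.2. One small correction to your framework: there is no single curve $\X$ with $\pi_i(\X)=C_i$ for both $i$. In the actual mechanism the strict transform $\C$ of $C$ is \emph{contracted} by the second morphism, while $D$ is the image of the last exceptional divisor $\E_m$ of the first; the two trees differ precisely by swapping $\C$ for $\E_m$.

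The substantive gap is the absence of a concrete construction. The paper's device is to pivot through an auxiliary smooth conic $Q$: one builds an explicit $\phi\in\Aut{\p\setminus Q}$ by blowing up a tower of $4+2n$ points on and infinitely near $Q$ (the dual graph is symmetric, so a second contraction exists), sets $C=\phi(L)$ for a tangent line $L$ of $Q$, then twists the tower by a birational map $f\in\Aut{\p\setminus L}$ preserving $Q$ and fixing $p_1$ to obtain a second $\phi'\in\Aut{\p\setminus Q}$ and $D=\phi'(L)$. The isomorphism is $\psi=\phi'\circ f\circ\phi^{-1}$, and the degree $4n+1$ drops out of the intersection arithmetic on the dual graph. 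Without this conic--line pivot it is unclear how you would actually locate two inequivalent contractions realising the same combinatorial tree.

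Your non-equivalence strategy, a moduli count against $\dim\mathrm{PGL}_3=8$, is genuinely different and as stated only heuristic: even if the equisingular stratum exceeds eight moduli, you must still argue that your \emph{particular} pair $(C,D)$ lies in distinct orbits, which requires controlling how the construction parameters interact with the $\mathrm{PGL}_3$-action. The paper bypasses this entirely by a direct computation: any $\alpha\in\Aut{\p}$ with $\alpha(C)=D$ descends through $\phi,\phi'$ to an automorphism $a$ of $\p$ preserving $L$, $Q$, and $p_1$, hence of the shape $(x:y:z)\mapsto(k^2x:ky:z)$; matching the action of $a$ against that of $f$ on the infinitely near points $p_{5+n},\dots,p_{4+2n}$ then forces the twist parameter $\lambda$ of $f$ to equal $1$, contradicting the choice $\lambda\neq 1$. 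This argument is elementary and works over an arbitrary field, which also disposes of your secondary concern about characteristic.
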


This conjecture leads to several alternatives. Let $\psi : \p \backslash C \to \p \backslash D$ be an isomorphism.\
If the conjecture holds, then :
\begin{itemize}
	\item either $\psi$ extends to an automorphism of $\p$ and we can choose $\alpha:=\psi$.
	\item or $\psi$ extends to a strict birational map $\psi : \p \dasharrow \p$. In this case, there's an automorphism $\alpha : \p \to \p$ such that $\alpha(C)=D$.
\end{itemize}
Otherwise, if $\psi$ gives a counterexample to the conjecture, then :
\begin{itemize}
	\item either $C$ and $D$ are not isomorphic.
	\item or $C$ and $D$ are isomorphic, but not by an automorphism of $\p$.
\end{itemize}

In this text, we are going to study the conjecture in the case of curves of type I.\\

\begin{dfn} We say that a curve $C \subset \p$ is of \textbf{type I} if there's a point $p \in C$ such that $C \backslash p$ is isomorphic to $\A^1$.\\
We say that a curve $C \subset \p$ is of \textbf{type II} if there's a line $L \subset \p$ such that $C \backslash L$ is isomorphic to $\A^1$.\\
\end{dfn}
All curves of type II are of type I, but the converse is false in general. Moreover, a curve of type I is a line, a conic, or a unicuspidal curve (a curve with one singularity of cuspidal type).

In the case of curves of type II, H. Yoshihara showed that the conjecture is true \cite{Yos84}, but in general the conjecture doesn't hold. Some counterexamples are given in \cite{Bla09}, but these curves are not of type I.

In this article, we give a new family of counterexamples, of degree $4m+1$ for any $m\ge 2$. These are all of type I, and some of them have degree $9$, which seems to be the lowest possible degree (see the end of the article for more details). In Section 2 we give a general way to constuct examples, that we precise in Section 3. The last section is the conclusion.\\

I would like to thank J. Blanc for asking me the question and for his help during the preparation of this article. I also thank T. Vust for interesting discussions on the result.

\section{The Construction.}

We begin with giving a general construction, which provides isomorphisms of the form $\compl C \to \compl D$ where $C,D$ are curves in $\p$. We start with the following definition :
\begin{dfn}
We say that a morphism $\pi : S \to \p$ is a \textbf{$(-1)-$tower resolution} of a curve $C$ if :
\begin{enumerate}
	\item $\pi=\pi_1 \circ ... \circ \pi_m$ where $\pi_i$ is the blow-up of a point $p_i$,
	\item $\pi_i(p_{i+1})=p_i$ for $i=1,...,m-1$,
	\item the strict transform of $C$ in $S$ is a smooth curve, isomorphic to $\mathbb{P}^1$, and has self-intersection $-1$.
\end{enumerate}
\end{dfn}

The isomorphisms of the form $\compl C \to \compl D$ are closely related to $(-1)-$tower resolutions of $C$ and $D$ because of the following Lemma :

\begin{lemme}[\cite{Bla09}] \label{Bla09}
Let $C \subset \p$ be an irreducible algebraic curve and $\psi : \compl C \to \compl D$ an isomorphism. Then, either $\psi$ extends to an automorphism of $\p$, either it extends to a strict birational transform $\phi : \p \dasharrow \p$.\\
Consider the second case. Let $\chi : X \to \p$ a minimal resolution of the indeterminacies of $\phi$, call $\E_1,...,\E_m$ and $\C$ the strict transforms of its exceptional curves and $C$ in $X$ and set $\epsilon:=\phi \circ \chi$. Then :
\begin{enumerate}
	\item $\chi$ is a $(-1)-$tower resolution of $C$
	\item $\epsilon$ collapses $\C, \E_1,..., \E_{m-1}$ and $\epsilon(\E_m)=D$,
	\item $\epsilon$ is a $(-1)-$tower resolution of $D$.
\end{enumerate}
\end{lemme}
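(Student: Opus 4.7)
The proof falls naturally into three stages: first extend $\psi$ to a rational self-map $\phi$ of $\p$ and take its minimal resolution, then analyse how the curves of $X$ are distributed between contracted and non-contracted under $\epsilon$, and finally invoke the symmetry between $\phi$ and $\phi^{-1}$ to obtain item (3).

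For the dichotomy, a rational map from a smooth projective surface to a projective variety is defined outside a set of codimension at least two, so $\psi$ extends to a rational map $\phi : \p \dasharrow \p$. If $\phi$ happens to be a morphism, then it is a birational morphism $\p \to \p$; since $\p$ admits no blow-downs, $\phi$ is an automorphism, which is the first alternative. Otherwise a minimal resolution $\chi : X \to \p$ of the indeterminacies of $\phi$ exists and factors as a chain of point blow-ups $\pi_1 \circ \cdots \circ \pi_m$, and $\epsilon := \phi \circ \chi$ is a birational morphism.

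The central step is to locate each of $\C, \E_1, \dots, \E_m$ under $\epsilon$. The fact that $\psi$ is an isomorphism forces $\epsilon^{-1}(D) \subset \C \cup \E_1 \cup \cdots \cup \E_m$. Zariski's connectedness theorem applied to the birational morphism $\epsilon$ rules out any such component being sent to a point outside $D$: if $\epsilon(F) = z \notin D$, then $\epsilon^{-1}(z)$ would contain $F$ together with the disjoint isolated point $\chi^{-1}(\psi^{-1}(z))$, contradicting connectedness of fibres. Hence every component maps into $D$, and birationality of $\epsilon$ permits exactly one to map onto $D$ while the remaining $m$ are contracted to points of $D$. I now combine this with minimality of $\chi$, which is equivalent to saying that no $(-1)$-curve of $X$ is simultaneously $\chi$-exceptional and $\epsilon$-exceptional. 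The $(-1)$-curves among $\E_1, \dots, \E_m$ in $X$ are precisely those $\E_i$ not further blown up; by minimality each such curve must map onto $D$ under $\epsilon$, and birationality allows only one. Consequently the blow-up centres form a single chain with $\pi_i(p_{i+1}) = p_i$, its terminal curve is $\E_m$, and $\epsilon(\E_m) = D$. This yields item (2) together with conditions (1) and (2) of the definition of a $(-1)$-tower resolution for $\chi$.

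To complete item (1) I must show $\C^2 = -1$ in $X$. Decomposing $\epsilon$ as a sequence of blow-downs, the first one contracts a $(-1)$-curve inside the $\epsilon$-exceptional locus $\C \cup \E_1 \cup \cdots \cup \E_{m-1}$. In a single tower each $\E_i$ with $i < m$ has been blown up further (since $p_{i+1} \in E_i$), so $\E_i^2 \le -2$ in $X$; the only candidate is $\C$, forcing $\C^2 = -1$, and the smoothness and rationality of $\C$ are automatic for an irreducible component of the exceptional locus of a birational morphism of smooth surfaces. Item (3) then follows by symmetry: the minimality condition on $\chi$ as a resolution of $\phi$ coincides with the minimality condition on $\epsilon$ as a resolution of $\phi^{-1}$, so applying items (1)--(2) with the roles of $\phi$ and $\phi^{-1}$ interchanged gives the $(-1)$-tower resolution structure on $\epsilon$.

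The main obstacle I foresee is the single-tower step: combining birationality of $\epsilon$ with minimality of $\chi$ to rule out both multiple starting points and branching in the infinitely-near centre configuration requires a careful bookkeeping of which exceptional curves remain $(-1)$-curves in $X$, and this is the most delicate part of the argument.
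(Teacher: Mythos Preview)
The paper does not give a proof of this lemma: it is stated with the attribution \cite{Bla09} and used as a black box, so there is no in-paper argument to compare against.

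Your proposal is nonetheless a correct outline of the standard proof. The extension of $\psi$ by codimension reasons, the use of minimality to forbid any $(-1)$-curve from being simultaneously $\chi$- and $\epsilon$-exceptional, and the consequent single-tower structure with $\E_m$ as the unique terminal $(-1)$-curve are all sound. The identification of $\C$ as the first curve contracted by $\epsilon$ (hence $\C^2=-1$, smooth, rational) and the symmetry swap $\phi\leftrightarrow\phi^{-1}$ for item (3) are the right closing moves. One small sharpening: your Zariski-connectedness argument for ``no exceptional curve lands outside $D$'' can be replaced by the one-line observation that any point in $\epsilon(\mathrm{Exc}(\epsilon))$ is a base point of $\phi^{-1}$, and $\phi^{-1}$ restricts to the morphism $\psi^{-1}$ on $\p\setminus D$, so all such base points lie on $D$; this avoids the bookkeeping about whether the isolated fibre point could accidentally sit on an exceptional component.
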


\begin{rem}
This lemma shows that if $C$ doesn't admit a $(-1)-$tower resolution, then every isomorphism $\compl C \to \compl D$ extends to an automorphism of $\p$. So counterexamples will be given by rational curves with only one singularity.
\end{rem}

We start with a smooth conic $Q \subset \p$ and $\phi \in \Aut{\compl Q}$ which extends to a strict birational map $\phi : \p \dasharrow \p$. Call $p_1,...,p_m$ the indeterminacies points of $\phi$; according to Lemma~$\ref{Bla09}$, we can order the points so that $p_1$ is a point of $\p$ and $p_i$ is infinitely near to $p_{i-1}$ for $i=2,...,n$. Consider $\chi : X \to \p$, a minimal resolution of the indeterminacies of $\phi$ and set $\epsilon:=\phi \circ \chi$. Lemma~$\ref{Bla09}$ says that :
\begin{enumerate}
	\item $\chi$ is a $(-1)-$tower resolution of $Q$,
	\item $\epsilon$ collapses $\Q, \E_1,..., \E_{m-1}$ and $\epsilon(\E_m)=Q$,
	\item $\epsilon$ is a $(-1)-$tower resolution of $Q$.
\end{enumerate}
Now, consider a line $L \subset \p$, which is tangent to $Q$ at $p \neq p_1$. Since $\phi$ contracts $Q$, then $C:=\phi(L)$ is a curve with an unique singular point which is $\phi(Q)$. Since $L \cap (\compl Q) \simeq \A^1$, we have $C \cap (\compl Q) \simeq \A^1$, which means that $C$ is of type I.

Consider now a birational map $f \in \Aut{\compl L}$ which extends to a strict birational map $\p \dasharrow \p$ and satisfies :
\begin{enumerate}
	\item $f(Q)=Q$,
	\item $f(p_1)=p_1$.
\end{enumerate}

Now, we are going to get a new birational map $\phi' : \p \dasharrow \p$ which restricts to an automorphism of $\compl Q$ using the $p_i$'s and $f$. Set :
$$p_i':=f(p_i).$$
Note that $p_i'$ is a well-defined point infinitely near to $p_{i-1}'$ for $i>1$.\\
Let's call $\chi' : X' \to \p$ the blow-up of the $p_i'$'s and $\E_1',...,\E_m'$ and $\Q'$ the strict transforms of the exceptional curves of $\chi'$ and of $Q$ in $X'$.\\
Since $f(Q)=Q$ and $f$ is an isomorphism at the neighbourhood of $p_1$, the intersections between $\E_1,...,\E_m$ and $\Q'$ are the same as those between $\E_1,...,\E_m$ and $\Q$. Then there's a morphism $\epsilon' : X' \to \p$ which contracts $\E_1',...,\E_{m-1}'$ and $\Q'$. Moreover, $\epsilon'(\E_m')$ is a conic, and up to isomorphism we can suppose that $\epsilon'(\E_m')=Q$.
\\
By construction, the birational map $\phi'$ restricts to an automorphism of $\compl Q$. In fact, neither of the $p_i'$'s belongs to $L$ (as proper or infinitely near point), so $\phi'(L)$ is well defined. Moreover, $\phi'$ collapses $Q$, so $D:=\phi'(L)$ is a curve with an unique singular point which is $\phi'(Q)$.\\
Set then $\psi:=\phi' \circ f \circ \phi^{-1}$. We have the following commutative diagram :\\
\[\xymatrix@R=3.5mm{
	 & X' \ar[rd]^{\epsilon'} \ar[ld]_{\chi'} &  \\
	\p \ar@{-->}[rr]^{\phi'} & & \p \\
	 & X \ar[rd]^{\epsilon} \ar[ld]_{\chi} &  \\
	\p \ar@{-->}[rr]^{\phi} \ar@{-->}[uu]^{f} & & \p \ar@{-->}[uu]_{\psi} \\
	}
\]
\begin{lemme}
The map $\psi : \compl C \to \compl D$ induced by the birational map defined above is an isomorphism.
\end{lemme}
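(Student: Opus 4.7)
The plan is to factor $\psi$ through the resolutions $X$ and $X'$ appearing in the diagram and verify that the resulting composition restricts to an isomorphism on the relevant open sets.

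First I would introduce the birational lift $\tilde{f} := (\chi')^{-1} \circ f \circ \chi : X \dasharrow X'$. Because $f$ is an automorphism of $\compl L$ with $f(Q) = Q$ and $f(p_i) = p_i'$, and neither cluster $\{p_1,\ldots,p_m\}$ nor $\{p_1',\ldots,p_m'\}$ lies on $L$ (as proper or infinitely near points), $f$ is a local isomorphism in a neighbourhood of $p_1$ and sends the cluster at $p_1$ to the cluster at $p_1'$. It follows that $\tilde{f}$ extends to an isomorphism $X \setminus \tilde{L} \to X' \setminus \tilde{L}'$, where $\tilde{L}$ and $\tilde{L}'$ denote the strict transforms of $L$ in $X$ and $X'$. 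In particular, $\tilde{f}$ sends the strict transforms $\Q, \E_1, \ldots, \E_m$ to $\Q', \E_1', \ldots, \E_m'$ respectively.

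Using the commutativity of the diagram I would rewrite $\psi = \phi' \circ f \circ \phi^{-1}$ as $\epsilon' \circ \tilde{f} \circ \epsilon^{-1}$, and then consider the open subsets
\[
U := X \setminus (\tilde{L} \cup \Q \cup \E_1 \cup \cdots \cup \E_{m-1}), \quad U' := X' \setminus (\tilde{L}' \cup \Q' \cup \E_1' \cup \cdots \cup \E_{m-1}').
\]
By the previous paragraph, $\tilde{f}$ restricts to an isomorphism $U \to U'$. The main step is then to show that $\epsilon$ restricts to an isomorphism $U \to \compl C$ and symmetrically that $\epsilon'$ restricts to an isomorphism $U' \to \compl D$. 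By Lemma~\ref{Bla09} applied to $\phi$, the map $\epsilon$ contracts $\Q, \E_1, \ldots, \E_{m-1}$ to the singular point $q := \phi(Q)$ of $C$ and is an isomorphism on $X \setminus (\Q \cup \E_1 \cup \cdots \cup \E_{m-1})$ onto $\p \setminus \{q\}$. Since $\epsilon(\tilde{L}) = \phi(L) = C$ and $\tilde{L}$ is disjoint from the exceptional divisors $\E_i$ of $\chi$ (because $L$ avoids the cluster at $p_1$), removing $\tilde{L}$ as well removes from the image exactly $C \setminus \{q\}$, yielding $\compl C$. Composing $\epsilon^{-1} : \compl C \to U$, $\tilde{f} : U \to U'$, and $\epsilon' : U' \to \compl D$ gives $\psi$ as the desired isomorphism.

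I expect the main obstacle to be the bookkeeping in this last step: checking that $\tilde{L}$ meets $\Q$ exactly at the single point above the tangency $p$ (which is then contracted by $\epsilon$ to $q$), that $\tilde{L}$ is disjoint from all $\E_i$, and the analogous statements on the $X'$ side. All of these follow from the geometric setup, namely that $L$ is tangent to $Q$ at $p \neq p_1$ and that neither cluster $\{p_i\}$ nor $\{p_i'\}$ meets $L$ as a proper or infinitely near point.
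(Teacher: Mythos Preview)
Your argument is correct and follows the same underlying idea as the paper: factor $\psi$ through the resolutions $X$ and $X'$ via the lift $\tilde f=(\chi')^{-1}\circ f\circ\chi$, use that $f$ is a local isomorphism near $p_1$ sending the tower $\{p_i\}$ to $\{p_i'\}$ so that $\tilde f$ matches $\Q,\E_1,\dots,\E_m$ with $\Q',\E_1',\dots,\E_m'$, and then read off the isomorphism on complements. The paper presents this more tersely: it first observes that the composition $\phi'\circ f\circ\phi^{-1}$ is automatically an isomorphism $\p\setminus(C\cup Q)\to\p\setminus(D\cup Q)$ (since $\phi,\phi'\in\Aut{\compl Q}$, $f\in\Aut{\compl L}$, $f(Q)=Q$, $\phi(L)=C$, $\phi'(L)=D$), so that the only thing left is to see that $\psi$ carries $Q$ to $Q$; it then checks $\psi(Q)=Q$ by the same factorisation you use, tracking $Q\mapsto\E_m\mapsto\E_m'\mapsto Q$. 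Your version makes explicit the bookkeeping (the open sets $U,U'$, the disjointness of $\tilde L$ from the $\E_i$, etc.) that the paper leaves implicit, but the route is the same.
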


\begin{proof}
Since $\phi,\phi' \in \Aut{\compl Q}$ and $f \in \Aut{\compl L}$, we only have to check that $\psi(Q)=Q$.\\
Let $\chi : X \to \p$ (resp. $\chi' : X' \to \p$) be a minimal resolution of the indeterminacies of $\phi$ (resp. $\phi'$) and write $\epsilon:=\phi \circ \chi$ (resp. $\epsilon':=\phi' \circ \chi'$). Call $\E_1,...,\E_m$ (resp. $\E_1',...,\E_m'$) the strict transforms of the exceptional curves of $\chi$ (resp. $\chi'$) in $X$ (resp. $X'$). It follows from Lemma~$\ref{Bla09}$ that $\epsilon(\E_m)=Q$ (resp. $\epsilon'(\E_m')=Q$). Then factorising $\psi$ we get $\psi(Q)=Q$.
\end{proof}

Now we study the automorphisms $\alpha \in \Aut{\p}$ such that $\alpha(C)=D$.

\begin{lemme}
If $\alpha \in \Aut{\p}$ sends $C$ onto $D$, then $a:=(\phi')^{-1} \circ \alpha \circ \phi$ is an automorphism of $\p$ and satisfies :
\begin{enumerate}
	\item $a(L)=L$,
	\item $a(Q)=Q$,
	\item $a(p_i)=p_i'$ for $i=1,...,m$.
\end{enumerate}
\[\xymatrix@R=3.5mm{
	 & X' \ar[rd]^{\epsilon'} \ar[ld]_{\chi'} &  \\
	\p \ar@{-->}[rr]^{\phi'} & & \p \\
	 & X \ar[rd]^{\epsilon} \ar[ld]_{\chi} &  \\
	\p \ar@{-->}[rr]^{\phi} \ar@{-->}[uu]^{a} & & \p \ar@{-->}[uu]_{\alpha} \\
	}
\]
\end{lemme}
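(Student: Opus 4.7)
The plan is to lift $\alpha$ to an isomorphism $\tilde\alpha : X \to X'$ of the two common resolutions appearing in the diagram and then to read off all three properties, together with the assertion that $a$ is a morphism (hence an automorphism) of $\p$, from the identity $a = \chi' \circ \tilde\alpha \circ \chi^{-1}$. Property $(1)$ is essentially formal once $a$ is known to be regular: on the open dense subset $L \setminus \{p\} \subset \compl Q$ one has $\phi(L \setminus \{p\}) = C \setminus \{q\}$, then $\alpha(C \setminus \{q\}) = D \setminus \{q'\}$ (since $q$ and $q'$ are the unique singular points of $C$ and $D$, they are exchanged by $\alpha$), then $(\phi')^{-1}(D \setminus \{q'\}) = L \setminus \{p\}$, so $a(L) = L$.

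The substantive step is the production of $\tilde\alpha$, which amounts to checking that $\alpha$ carries the tower of indeterminacies $q_1, \ldots, q_m$ of $\phi^{-1}$ onto the analogous tower $q_1', \ldots, q_m'$ of $(\phi')^{-1}$. By the $(-1)$-tower structure of $\epsilon$, each $q_{i+1}$ lies on the strict transform of $Q$ after blowing up $q_1, \ldots, q_i$, so once $\alpha(Q) = Q$ (together with $\alpha(q_1) = q_1'$, which is automatic) the full tower identities $\alpha(q_i) = q_i'$ follow. To establish $\alpha(Q) = Q$, I observe that by Bezout and the construction $Q \cap C = \{q\}$, so $Q$ meets $C$ at $q$ with intersection multiplicity $Q \cdot C = 2 \deg C$; hence $\alpha(Q)$ is a smooth conic meeting $D$ only at $q'$ with intersection multiplicity $2 \deg D = 2 \deg C$. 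Since $Q$ already enjoys this high-contact property with $D$ at $q'$, and $\deg C \geq 9$ is more than enough for a conic to be determined by osculation to the cuspidal branch of $D$ at $q'$ to order $2 \deg C$, I conclude $\alpha(Q) = Q$. I expect this conic-uniqueness argument to be the main obstacle.

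With the tower identities in hand, the universal property of blowing up yields a unique isomorphism $\tilde\alpha : X \to X'$ above $\alpha$, sending $\Q$ to $\Q'$ and each $\E_i$ to $\E_i'$. In the resulting identity $a = \chi' \circ \tilde\alpha \circ \chi^{-1}$ the only candidate indeterminacy of $a$ in $\p$ is $p_1$, but $\chi^{-1}(p_1) = \E_1 \cup \cdots \cup \E_m$ is mapped by $\tilde\alpha$ onto $\E_1' \cup \cdots \cup \E_m'$ and then contracted by $\chi'$ to $p_1'$, so $a$ extends regularly with $a(p_1) = p_1'$; applying the same argument to $\tilde\alpha^{-1}$ shows $a^{-1}$ is regular too, so $a \in \Aut{\p}$. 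The remaining properties $a(Q) = \chi'(\tilde\alpha(\Q)) = \chi'(\Q') = Q$ and $a(p_i) = p_i'$ (from $\tilde\alpha(\E_i) = \E_i'$ together with the tower structure of $\chi'$) follow at once.
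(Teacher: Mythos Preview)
Your argument contains a genuine error in the step you flagged as ``substantive''. You assert that ``by the $(-1)$-tower structure of $\epsilon$, each $q_{i+1}$ lies on the strict transform of $Q$ after blowing up $q_1,\ldots,q_i$''. This is a misreading of the definition of a $(-1)$-tower resolution: condition~(2) only says that $q_{i+1}$ lies on the exceptional curve over $q_i$, not on the strict transform of $Q$. In fact the strict transform of $Q$ under $\epsilon$ is $\E_m$, which has self-intersection $-1$; since $Q^2=4$ in $\p$, exactly five of the $q_i$ lie on $Q$, namely $q_1,\ldots,q_5$, and no more. Consequently, even granting $\alpha(Q)=Q$ and $\alpha(q_1)=q_1'$, your inductive mechanism produces $\alpha(q_i)=q_i'$ only for $i\le 5$; for $i\ge 6$ the point $q_i$ is not on the strict transform of $Q$ and there is nothing forcing $\alpha$ to send it to $q_i'$.

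The paper proceeds in the opposite order, and this is what makes the proof work. One first observes that $\epsilon$ (resp.\ $\epsilon'$) is simultaneously a resolution of the unicuspidal curve $C$ (resp.\ $D$): the tower $q_1,\ldots,q_m$ is exactly the sequence of infinitely near singular points of $C$. Hence $\alpha(C)=D$ \emph{immediately} forces $\alpha(q_i)=q_i'$ for every $i$, and $\alpha$ lifts to an isomorphism $X\to X'$ matching the $\epsilon$-exceptional curves $\{\Q,\E_1,\ldots,\E_{m-1}\}$ with the $\epsilon'$-exceptional curves $\{\Q',\E_1',\ldots,\E_{m-1}'\}$. Only \emph{then} does one deduce $\alpha(Q)=Q$, and by the very observation above: $Q$ is the unique conic through $q_1,\ldots,q_5$, and likewise through $q_1',\ldots,q_5'$, so $\alpha(Q)=Q$; this in turn shows that the lift also sends $\E_m$ to $\E_m'$, and the rest follows as you wrote. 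Your osculation heuristic for $\alpha(Q)=Q$ is, once unwound, essentially this ``five points determine a conic'' statement, but you cannot use it as the \emph{starting} point for identifying the full tower.
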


\begin{proof}
Call $q_1,...,q_m$ (resp. $q_1'$, ..., $q_m'$) the points blown-up by $\epsilon$ (resp. $\epsilon'$). Then these points are the singular points of $C$ (resp. $D$). Since $\alpha$ is an automorphism such that $\alpha(C)=D$, then $\alpha$ sends $q_i$ on $q_i'$ for $i=1,...,m$, and lifts to an isomorphism $X \to X'$ which sends $\E_i$ on $\E_i'$ for $i=1,...,m-1$ and $\Q$ on $\Q'$.\\
Since $Q$ is the conic through $q_1,...,q_5$, then $\alpha(Q)=Q$, and the isomorphism $X \to X'$ sends $\E_m$ on $\E_m'$. So $\chi$ and $\chi'$ contract the curves in $X$ and $X'$ which correspond by mean of this isomorphism, and we deduce that $a \in \Aut{\p}$.\\
It follows then that $a$ sends $p_i$ on $p_i'$, $a(Q)=Q$ and that $a(L)=L$.
\end{proof}

\section{The counterexample.}
In this section, we describe more explicitely the construction given in the previous section, by giving more concrete examples.\\
We choose $n\geq 1$ and will define $\Delta\colon X\to \p$ which is the blow-up of some points $p_1,\dots,p_{4+2n}$, such that $p_1\in \p$, and for $i\geq 2$ the point $p_i$ is infinitely near to $p_{i-1}$. We call $E_i$ the exceptional curve associated to $p_i$ and $\tilde{E_i}$ its strict transform in $X$. The points will be choosed so that :
\begin{itemize}
	\item $p_i$ belongs to $Q$ (as proper or infinitely near points) if and only if $i\in \{1,\dots,4\}$,
	\item $p_i$ belongs (as a proper or infinitely near point) to $E_4$ if and only if  $i\in \{5,\dots,4+n\}$,
	\item $p_i\in E_{i-1}\backslash E_{i-2}$ if $i\in \{5+n,\dots, 4+2n\}$.
\end{itemize}

Note that $p_1,\dots,p_{4+n}$ are fixed by these conditions, and that $p_{5+n},\dots, p_{4+2n}$ depends on parameters.
On the surface $X$, we obtain the following dual graph of curves.\\

\begin{center}
\psset{xunit=0.6cm,yunit=0.4cm,algebraic=true,dotstyle=*,dotsize=3pt 0,linewidth=0.8pt,arrowsize=3pt 2,arrowinset=0.25}
\begin{pspicture*}(-2.27,-6.5)(18.88,3)
\psline(-1,2)(1,2)
\psline[linestyle=dashed,dash=3pt 3pt](3,2)(5,2)
\psline(5,2)(7,2)
\psline(7,2)(9,2)
\psline(9,2)(11,2)
\psline(13,2)(15,2)
\psline(7,2)(7,0)
\psline(7,0)(7,-2)
\psline(7,-2)(7,-4)
\psline(7,-4)(7,-6)
\psline(1,2)(3,2)
\psline[linestyle=dashed,dash=3pt 3pt](11,2)(13,2)
\psdots[linecolor=blue](-1,2)
\rput[bl](-0.9,2.17){\blue{$\Q[-1]$}}
\psdots[linecolor=blue](1,2)
\rput[bl](1.09,2.17){\blue{$\E_5$}}
\psdots[linecolor=blue](3,2)
\rput[bl](3.12,2.17){\blue{$\E_6$}}
\psdots[linecolor=blue](5,2)
\rput[bl](5.11,2.17){\blue{$\E_{3+n}$}}
\psdots[linecolor=blue](7,2)
\rput[bl](7.11,2.17){\blue{$\E_{4+n}$}}
\psdots[linecolor=blue](9,2)
\rput[bl](9.11,2.17){\blue{$\E_{5+n}$}}
\psdots[linecolor=blue](11,2)
\rput[bl](11.1,2.17){\blue{$\E_{6+n}$}}
\psdots[linecolor=blue](13,2)
\rput[bl](13.1,2.17){\blue{$\E_{3+2n}$}}
\psdots[linecolor=blue](15,2)
\rput[bl](15.1,2.17){\blue{$\E_{4+2n}[-1]$}}
\psdots[linecolor=blue](7,0)
\rput[bl](7.11,0.17){\blue{$\E_4[-(n+1)]$}}
\psdots[linecolor=blue](7,-2)
\rput[bl](7.11,-1.85){\blue{$\E_3$}}
\psdots[linecolor=blue](7,-4)
\rput[bl](7.11,-3.85){\blue{$\E_2$}}
\psdots[linecolor=blue](7,-6)
\rput[bl](7.11,-5.84){\blue{$\E_1$}}
\end{pspicture*}
\end{center}
\textsc{Figure 1.} The dual graph of the curves $\E_1,\dots,\E_{3+2n},E_{4+2n}, \Q$. Two curves have an edge between them if and only they intersect, and their self-intersection is written in brackets, if and only if it is not –2.\\

The symmetry of the graph implies the existence of a birational morphism $\epsilon \colon X\to \p$ which contracts the curves $\E_1,\dots,\E_{3+2n}, \Q$, and whichs sends $E_{4+2n}$ on a conic. We may choose that this conic is $Q$, so that $\phi= \epsilon \circ \Delta^{-1}$ restricts to an automorphism of $\compl Q$.\\
Calculating auto-intersection, the image by $\phi$ of a line of the plane which does not pass through $p_1$ has degree $4n+1$.

\subsection{Choosing the points}
Now we are going to choose the birational maps $f$ and the points which define $\phi$ in order to get two curves which give a counterexample to the conjecture of Yoshihara.

We choose that $L$ is the line of equation $z=0$, $Q$ is the conic of equation $xz=y^2$ and $p_1=(0:0:1)$.

We define the birational map $f : \p \dasharrow \p$ by :

\begin{center}
$f(x:y:z)=\left(\mu^2(\lambda xz+ (1-\lambda)y^2):\mu yz:z^2 \right)$
 with $\lambda,\mu\in \mathbb{K}^{*}$ and $\lambda \neq 1$.
\end{center}

The map $f$ preserves $Q$, and is an isomorphism at a local neighbourhood of $p_1$. In consequence, $f$ sends respectively $p_1,\dots,p_{4+2n}$ on some points $p_1',\dots,p_{4+2n}'$ which will define $\Delta '\colon X \to \p$, $\epsilon'\colon X'\to \p$ and $\phi'=\epsilon'\circ (\Delta')^{-1}$ in the same way as $\phi$ was constructed.

We describe now the points $p_i$ and $p_i'$ in local coordinates.\\
Since $f$ preserves $Q$ and fixes $p_1$, we have $p_i'=p_i$ for $i=1,\dots,4$. Locally, the blow-up of $p_1,\dots, p_4$ corresponds to :
$$\phi_4\colon \A^2\to \p \quad , \quad \phi_4(x,y)=(xy^4+y^2:y:1).$$

The curve $E_4$ corresponds to $y=0$, and the conic $\tilde{Q}$ to $x=0$. The lift of $f$ in these coordinates is :
$$(x,y)\mapsto (\lambda\mu^2x,\mu y).$$

The blow-up of the points $p_5,\dots,p_{4+n}$ (which are equal to $p_5',\dots,p_{4+n}'$) now corresponds to :
$$\phi_{4+n}\colon \A^2\to \A^2 \quad , \quad \phi_{4+n}(x,y)=(x,x^ny).$$

So the lift of $f$ corresponds to :
$$(x,y) \mapsto \left( \lambda \mu^2x, \tfrac{y}{\lambda^n \mu^{2n-1}} \right).$$

We set $p_{4+n+i}=(0,a_i)$ for $i\in \{1,\dots, n\}$ with $a_n \neq 0$. The blow-up of $p_{5+n},...,p_{4+n+i}$ now corresponds to :
\begin{center} $\phi_{4+n+i} : \A^2 \to \A^2$ \quad , \quad $\phi_{4+n+i}(x,y)=\left(x,x^iy+P_i(x) \right)$ where $P_i(x)=a_1x^{i-1}+...+a_i$. \end{center}
Since $f$ sends $p_i$ on $p_i'$, we can set $p_{4+n+i}'=(0,b_i)$ for $i\in \{1,\dots, n\}$ with $b_n \neq 0$. The blow-up of $p_{5+n}',...,p_{4+n+i}'$ then corresponds to :
\begin{center} $\phi_{4+n+i}' : \A^2 \to \A^2$ \quad , \quad $\phi_{4+n+i}'(x,y)=\left(x,x^iy+Q_i(x) \right)$ where $Q_i(x)=b_1x^{i-1}+...+b_i$. \end{center}
So the lift of $f$ corresponds to :
$$(x,y) \mapsto \left( \lambda \mu^2x, \tfrac{x^iy+P_i(x)-\lambda^i \mu^{2i-1}Q_i(\lambda\mu^2x)}{\lambda^i \mu^{2i-1}x^i} \right).$$

The curves $E_{4+n+i}$ and $E_{4+n+i}'$ correspond to $x=0$ in both local charts. Since $f$ is a local isomorphism which sends $p_i$ on $p_i'$ for each $i$, it has to be defined on the line $x=0$. Because $P_i$ and $Q_i$ have both degree $i-1$, this implies that:
\begin{center}
$P_i(x)=\lambda^i \mu^{2i-1}Q_i(\lambda\mu^2x)$ for $i=1,...,n$.
\end{center}
In particular, the coefficients satisfy :
\begin{center}
$a_i=\lambda^i \mu^{2i-1}b_i$ for $i=1,...,n$.
\end{center}
\subsection{The counterexample.}

Now to get a counter example, we must show that any automorphism $a : \p \to \p$ such that $a(L)=L$, $a(Q)=Q$ and $a(p_1)=p_1$ doesn't send $p_i$ on $p_i'$ for at least one $i\in \{5+n,\dots, 4+2n\}$. Let's start with the following Lemma :
\begin{lemme}
Let $a : \p \to \p$ be an automorphism such that $a(L)=L$, $a(Q)=Q$ and $a(p_1)=p_1$. Then $a$ is of the form :
$$a(x:y:z)=\left( k^2x : k y : z \right) \quad \mbox{where } k \in \K^{\ast}.$$
\end{lemme}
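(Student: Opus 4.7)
The plan is to write $a$ as a $3\times 3$ matrix and impose the three conditions one after the other, reducing the coefficients step by step until only a one-parameter family remains.

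First I observe that $L$ is tangent to $Q$ at the point $q:=(1:0:0)$: indeed on $L=\{z=0\}$ the equation $xz=y^2$ becomes $y^2=0$, so $L\cap Q=\{q\}$ as a scheme of length $2$. Since $a(L)=L$ and $a(Q)=Q$, the map $a$ preserves $L\cap Q$, hence $a(q)=q$. Together with $a(p_1)=p_1$, this says that $a$ fixes the two distinct points $q=(1:0:0)$ and $p_1=(0:0:1)$.

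I then write $a$ as a matrix $A=(a_{ij})_{1\le i,j\le 3}$ modulo scalars. The condition $a(p_1)=p_1$ forces the third column to be proportional to $(0,0,1)^{t}$, giving $a_{13}=a_{23}=0$. The condition $a(L)=L$ says that the linear form $z$ pulls back to a multiple of itself, which forces $a_{31}=a_{32}=0$. Consequently $A$ is block-diagonal, with a $2\times 2$ block in the upper-left corner and the scalar $a_{33}$ in the lower-right corner; note $a_{33}\ne 0$.

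Finally I impose $a(Q)=Q$: the quadratic form $xz-y^{2}$ must pull back to a scalar multiple of itself. With the block structure just established, a direct computation gives
$$A_1A_3-A_2^{\,2}=a_{11}a_{33}xz+a_{12}a_{33}yz-a_{21}^{2}x^{2}-2a_{21}a_{22}xy-a_{22}^{2}y^{2}.$$
Comparing coefficients, the vanishing of the $x^{2}$, $xy$, and $yz$ terms forces $a_{21}=0$ and $a_{12}=0$, while the $xz$ and $y^{2}$ terms yield the relation $a_{22}^{2}=a_{11}a_{33}$. Normalizing $a_{33}=1$ and writing $k:=a_{22}\in\K^{\ast}$, I get $a_{11}=k^{2}$, which produces the claimed form $a(x:y:z)=(k^{2}x:ky:z)$.

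No step is a real obstacle: once the tangency observation provides the second fixed point $q$, the rest is a direct linear-algebra verification.
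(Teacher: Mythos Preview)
Your proof is correct and is exactly the ``direct calculation'' the paper alludes to. One minor remark: your opening observation that $a$ fixes $q=(1:0:0)$ is correct and geometrically pleasant, but you never actually invoke it afterwards---the block structure and the vanishing of $a_{21}$, $a_{12}$ come entirely from $a(p_1)=p_1$, $a(L)=L$, and the comparison of quadratic forms---so that paragraph could be dropped without loss.
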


\begin{proof}
Follows from a direct calculation.
\end{proof}

\begin{thm}
If $n \geq 2$, the curves $C$ and $D$ obtained with the construction of the previous section give a counter example to the conjecture.
\end{thm}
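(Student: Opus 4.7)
The plan is a proof by contradiction, using the two lemmas preceding the theorem to reduce the question to showing that a certain system of equations in a single scalar $k$ is overdetermined as soon as $n\ge 2$.

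Assume that there exists $\alpha\in\Aut{\p}$ with $\alpha(C)=D$. Applying the Lemma of Section~2 on the form of such $\alpha$, I obtain $a=(\phi')^{-1}\circ\alpha\circ\phi\in\Aut{\p}$ satisfying $a(L)=L$, $a(Q)=Q$, $a(p_1)=p_1$ and $a(p_i)=p_i'$ for every $i=1,\dots,4+2n$. The Lemma just proved then forces
\[
a(x:y:z)=(k^2x:ky:z)\quad\text{for some }k\in\K^*.
\]

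The decisive observation is that this $a$ is a map of exactly the same shape as $f$, obtained by specialising $\lambda=1$ and $\mu=k$ in the formula for $f$. Consequently, the computation of the lifts of $f$ through the $(-1)$-tower carried out above applies verbatim to $a$, and the conditions $a(p_{4+n+i})=p_{4+n+i}'$ for $i=1,\dots,n$ translate into the polynomial identities $P_i(x)=k^{2i-1}Q_i(k^2x)$. Combined with the identities $P_i(x)=\lambda^i\mu^{2i-1}Q_i(\lambda\mu^2 x)$ already imposed by the construction of $f$, this yields, for each $i\in\{1,\dots,n\}$ with $b_i\ne 0$, the equation
\[
k^{2i-1}=\lambda^i\mu^{2i-1}.
\]

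The hypothesis $n\ge 2$ is exactly what produces two such equations. By choosing the free parameters so that $a_1$ and $a_2$ are both nonzero, hence $b_1,b_2\ne 0$, one obtains $k=\lambda\mu$ (from $i=1$) together with $k^3=\lambda^2\mu^3$ (from $i=2$). Substituting the first into the second gives $(\lambda\mu)^3=\lambda^2\mu^3$, hence $\lambda=1$, contradicting the standing hypothesis $\lambda\ne 1$ in the definition of $f$. The main delicate point is the successive bookkeeping of the lift of $a$ at each infinitely near level above $E_4$; it is precisely what guarantees two genuinely independent conditions on $k$ when $n\ge 2$, whereas for $n=1$ one would obtain only the single equation $k=\lambda\mu$, which is always satisfiable and explains why the construction needs $n\ge 2$.
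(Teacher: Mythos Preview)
Your argument is correct and follows essentially the same route as the paper's proof: reduce to an automorphism $a(x:y:z)=(k^2x:ky:z)$, recognise it as the specialisation $\lambda=1$, $\mu=k$ of $f$, lift through the tower to obtain $a_i=k^{2i-1}b_i$, and compare with the relation $a_i=\lambda^i\mu^{2i-1}b_i$ coming from $f$ to force $\lambda=1$. The only cosmetic difference is that the paper fixes $a_{n-1}=a_n=1$ and uses the pair $i=n-1,\,n$, whereas you take $a_1,a_2\ne 0$ and use $i=1,2$; either pair of indices yields the same contradiction. Just remember that the construction itself requires $a_n\ne 0$, so for $n>2$ you should also impose this (harmless) condition alongside $a_1,a_2\ne 0$.
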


\begin{proof}
Choose $a_n=a_{n-1}=1$.\\
Since $a$ is an automorphism, it lifts to an automorphism which sends $E_{4+n+i}$ on $E_{4+n+i}'$. Put $\lambda=1$ and $\mu=k$ in the formula for $f$. Then this lift corresponds to :
$$(x,y) \mapsto \left( k^2x, \tfrac{x^iy+P_i(x)-k^{2i-1}Q_i(k^2x)}{k^{2i-1}x^i} \right)$$
where $P_i$ and $Q_i$ are the polynomials defined above.\\
Since $E_{4+n+i}$ and $E_{4+n+i}'$ both correspond to $x=0$ in local charts, this lift has to be well defined on $x=0$. So since $P_i$ and $Q_i$ both have degree $i-1$, we get :
\begin{center}
$P_i(x)=k^{2i-1}Q_i(k^2x)$ for $i=1,...,n$
\end{center}
and the constant terms satisfy $a_i=k^{2i-1}b_i$ for $i=1,...,n$.\\
Since $a_n,a_{n-1} \neq 0$, then $b_n,b_{n-1} \neq 0$. As explained in the previous section, $a$ sends $p_i$ on $p_i'$, so we get :
\begin{center}
$\lambda^i \mu^{2i-1}b_i=k^{2i-1}b_i$ for $i=1,...,n$.
\end{center}
This formula for $i=n$ and $i=n-1$ gives $\lambda=1$ or $\mu=0$, which leads to a contradiction.
\end{proof}
\section{Conclusion.}

We conclude observing that the curves $C$ and $D$ of the previous construction have degree $4n+1$ (using Figure $1$) and are of type I. In particular, we get a counterexample with a curve of degree 9 when $n=2$. One can check by direct computation that the conjecture holds for irreducible curves of type I up to degree 5, because there's only one curve of degree 5 which is of type I and not of type II, up to automorphism of $\p$. One can also check that all irreducible curves of type I of degree 6, 7 and 8 are of type II. So the curves of degree 9 given by this construction leads to a counterexample of minimal degree among the curves of type I.

If we consider the conjecture for all rational curves, the counterexamples in \cite{Bla09} are of degree 39 (and not of type I). So we have new counterexamples with curves of lower degree. It seems that the curves of degree 9 give counterexamples of minimal degree among the rational curves, but it hasn't been shown yet.



\begin{thebibliography}{Yos87}

\bibitem[Bla09]{Bla09}
J.~Blanc,
\textit{The correspondance between a plane curve and its complement},
J. Reine Angew. Math. {\bf 633} (2009), 1--10.

\bibitem[Yos84]{Yos84}
H.~Yoshihara,
\textit{On open algebraic surfaces $\p - C$},
Math.~Ann.~\textbf{268}~(1984), 43--57.

\bibitem[Yos87]{Yos87}
H.~Yoshihara,
\textit{Rational curves with one cusp II},
Proc.~A.M.S.~\textbf{100}~(1987), 405--406.

\end{thebibliography}
\end{document}